\title{Clique-Relaxed Competitive Graph Coloring}
\author{
    Michel Alexis, Davis Shurbert, \\
    Dr.\ Charles Dunn, Dr.\ Jennifer Nordstrom
           }
\newcommand{\adj}[0]{
    \leftrightarrow
}
\newcommand{\maj}[0]{
    \textnormal{maj}
} \newcommand{\coloneqq}[0]{
    :=
}
\newcommand*\makeAlph[1]{\symbol{\numexpr96+#1}}
\theoremstyle{definition}
\newtheorem{definition}{Definition}[section]
\newtheorem{theorem}[definition]{Theorem}
\newtheorem{lemma}[definition]{Lemma}
\newtheorem{claim}{Claim}[definition]
\newtheorem{corollary}[definition]{Corollary}
\begin{document}
\maketitle

\section{Introduction}

In this paper we investigate a variation of the graph coloring game, as studied
in \cite{clique-relaxed}. In the original coloring game, two players, Alice and
Bob, alternate coloring vertices on a graph with legal colors from a fixed color
set, where a color $\alpha$ is \emph{legal} for a vertex if said vertex has no
neighbors colored $\alpha$. Other variations of the game change this definition
of a legal color. For a fixed color set, Alice wins the game if all vertices are
colored when the game ends, while Bob wins if there is a point in the game in
which a vertex cannot be assigned a legal color. The least number of colors
needed for Alice to have a winning strategy on a graph $G$ is called the
\emph{game chromatic number of G}, and is denoted $\chi_{\text{g}} (G)$. A well
studied variation is the \emph{$d$-relaxed coloring game} \cite{defective} in
which a legal coloring of a graph $G$ is defined as any assignment of colors to
$V(G)$ such that the subgraph of $G$ induced by any color class has maximum
degree $d$.

We focus on the \emph{$k$-clique-relaxed $n$-coloring game}. A
\emph{$k$-clique-relaxed $n$-coloring} of a graph $G$ is an $n$-coloring in
which the subgraph of $G$ induced by any color class has maximum clique size $k$
or less. In other words, a $k$-clique-relaxed $n$-coloring of $G$ is an
assignment of $n$ colors to $V(G)$ in which there are no monochromatic
$(k+1)$-cliques.

For a fixed color set, the $k$-clique-relaxed coloring game begins with Alice
coloring any vertex. Alice and Bob then alternate coloring vertices of $G$ such
that at no point in the game does there exists a monochromatic $(k+1)$-clique.
Again, Alice wins the game if it terminates with all vertices colored, while Bob
wins otherwise. The smallest number of colors needed for Alice to have a winning
strategy on $G$ is called the \emph{$k$-clique-relaxed game chromatic number of
$G$}, and is denoted $\chi^{(k)}_{\text{g}} (G)$. We use a modification of a
well known strategy for the original coloring game in order to construct a
winning strategy for Alice. Furthermore, we focus on the game as it is played on
chordal graphs and partial $k$-trees. We extend results from \cite{forests} and
\cite{clique-relaxed} by specifically looking at the $k$-clique-relaxed coloring
game as played on partial $k$-trees.

\section{Background}
Before exploring how the game plays out on particular graphs, we must go over
some basic definitions as well as the strategy used by Alice.

\subsection{Terminology}

Let $G$ be a graph with linear ordering $\mathcal{L} = v_1 , \ldots , v_n$ on
the vertices. When a vertex $x$ has a lower index in $\mathcal{L}$ than a vertex
$y$, we denote this relationship by \emph{$x<y$}.


\begin{definition}

For any vertex $v$ of $G$, we define the \emph{back-neighbors} or \emph{parents}
of $v$ as the neighbors of $v$ that are strictly less than $v$ in $\mathcal{L}$.
Similarly, we define the \emph{forward-neighbors} or \emph{children} of $v$ as
the neighbors of $v$ that are strictly greater than $v$ in $\mathcal{L}$. The
set of back-neighbors and forward-neighbors of a vertex $v$ are denoted $N^{+}
(v) = { \lbrace  x ~|~ x \leftrightarrow v \text{ and } x < v } \rbrace$ and
$N^{-} (v) = { \lbrace  x ~|~ x \leftrightarrow v \text{ and } x > v } \rbrace$
respectively. In addition, we define the sets $N^+[v] = N^+(v) \cup \{ v \}$ and
$N^-[v] = N^-(v) \cup \{ v \}$. In order to further emphasize the importance of
the linear ordering, we will visually represent an ordering $\mathcal{L}$ as an
orientation of $G$ such that for two vertices $x$ and $y$, $x$ is an outneighbor
of $y$ if and only if $x<y$ and $x \adj y$.

\end{definition}

\begin{definition}

We define the \emph{major parent} of a vertex $v$ with respect to $\mathcal{L}$
as the least element of $N^{+} (v)$. We denote the major parent of $v$ by
$\maj(v)$. Note that $\maj (v) = \min N^{+} (v)$ under $\mathcal{L}$.

\end{definition}

\begin{definition}

At any given point in the game, we define the following dynamic sets.

\begin{itemize}
\item The set of uncolored vertices $U = { \lbrace v \in V(G) ~|~ v \text{ is not colored}} \rbrace$.
\item The set of colored vertices $C = { \lbrace v \in V(G) ~|~ v \text { is colored}} \rbrace$.
\item A set of active vertices $A$ to be maintained by Alice. Using her
strategy, Alice will always activate a vertex before she colors it. When Bob
colors a vertex $v$, Alice immediately activates $v$. Note that $C \subseteq A$.
\end{itemize}
\end{definition}



\begin{definition}

For any $x \in C$, let $c(x)$ denote the color of $x$.

\end{definition}

\begin{definition}

We define a vertex $w$ to be the \emph{mother} of $x$ if $w$ is the least
uncolored element of $N^+ [x]$ with respect to $\mathcal{L}$. We denote this
vertex $m(x)$.

\end{definition}

Note that if $x$ and all of its parents are already colored, $m(x)$ does not
exist. Furthermore, if $x \in U$, then $x$ might be its own mother. Also note
that the mother of a vertex $x$ will change throughout the course of the game.

%
%

%
%
%
%

\subsection{The Activation Strategy}

We will now outline the \emph{Activation Strategy} to be used by Alice, a
variation of the strategy used in \cite{chordal}. Given a graph $G$, Alice
chooses a linear ordering $\mathcal{L}$ (we will outline later what specific
properties we want $\mathcal{L}$ to satisfy) which she will reference throughout
the game. On her first turn, Alice activates and colors the least vertex in
$\mathcal{L}$. When Bob colors a vertex $b$, Alice's move will be conducted in
two stages: a search stage, in which she searches for a vertex to color, and a
coloring stage, in which she decides which color to use on the vertex.

\paragraph{Search Stage}
This stage will be carried out in two steps: an initial step, and a recursive
step (which may be skipped depending on the initial step).

\subparagraph{Initial Step}

Once Bob has colored vertex $b$, Alice activates $b$ and proceeds to search for
the mother $m(b)$. If $m(b)$ exists, Alice sets $x \coloneqq m(b)$ and moves to
the recursive step. If $m(b)$ does not exist, then Alice selects the least
uncolored vertex $u$ in $\mathcal{L}$, and proceeds to the coloring stage.

\subparagraph{Recursive Step}

In this step, Alice looks at some uncolored vertex $x$. If $x$ is inactive,
Alice activates $x$, sets $x \coloneqq m(x)$ and repeats the recursive step. If
$x$ is active, Alice sets $u \coloneqq x$ and proceeds to the coloring stage.

\paragraph{Coloring Stage}

Alice chooses a legal color for $u$.

\section{Chordal Graphs and Partial $k$-Trees}

In this section we analyze the performance of Alice's Activation Strategy for
the clique-relaxed coloring game on chordal graphs and partial $k$-trees. The
benefit of studying these classes of graphs is that partial $k$-trees encompass
a multitude of graphs, while chordal graphs maintain specific properties which
prove to be beneficial while playing the clique-relaxed coloring game.

Recall that a graph $G$ is \emph{chordal} if $G$ does not contain $C_n$ with $n
\geq 4$ as an induced subgraph. Furthermore, recall that a \emph{$k$-clique} is
defined as a complete subgraph of a graph $H$ over $k$ vertices. The
\emph{clique number} of $G$, denoted $\omega(G)$, is the size of its largest
clique.

\begin{definition}

A \emph{partial $k$-tree} is a graph $G$ which is a subgraph of a chordal graph
$H$, where $\omega(H) = k+1$ and $V(G)=V(H)$.

\end{definition}

\begin{theorem}
\label{simplicial}
\cite{West} $H$ is a chordal graph if and only if there exists a linear ordering
$\mathcal{L}$ of the vertices of $H$ such that for each vertex $v \in V(H)$, the
subgraph induced by the elements of $N^{+} [v]$ with respect to $\mathcal{L}$
form a clique. We call such an ordering a \emph{simplicial ordering} on $H$.
\end{theorem}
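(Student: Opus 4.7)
The plan is to prove both directions of the biconditional separately.

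For the easier direction, suppose $H$ admits a simplicial ordering $\mathcal{L}$; I would show $H$ is chordal by contradiction. Assume $H$ contains an induced cycle $C = w_1 w_2 \cdots w_k w_1$ with $k \geq 4$, and let $v$ be the vertex of $C$ with the largest index under $\mathcal{L}$. Its two neighbors $w, w'$ on $C$ both satisfy $w, w' < v$ and $w, w' \adj v$, hence both lie in $N^+[v]$. The simplicial property forces $w \adj w'$, contradicting that $C$ is induced with $k \geq 4$.

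For the harder direction, I would proceed by induction on $|V(H)|$. The key auxiliary fact is that every chordal graph contains a \emph{simplicial vertex}, i.e., a vertex $v$ such that $\{v\} \cup N(v)$ induces a clique. Granted this, pick such a vertex $v$ in $H$, set $v_n \coloneqq v$, and note that $H - v$ is chordal (being an induced subgraph of $H$). By induction, $H - v$ has a simplicial ordering $v_1, \ldots, v_{n-1}$. Appending $v_n$ yields the desired ordering of $H$: for $i < n$ the back-neighborhood $N^+[v_i]$ is unchanged since $v_n$ is larger than every $v_i$, and for $i = n$, $N^+[v_n] = \{v\} \cup N(v)$ is a clique by choice of $v$.

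The main obstacle is the auxiliary existence claim, for which I would use the classical argument via minimal vertex separators. First I would establish the preliminary lemma that in any chordal graph, every minimal $a,b$-separator induces a clique; the proof picks two vertices of the separator and builds a short cycle through both components, whose required chord must lie inside the separator. Given this, I would prove by strong induction that every chordal graph is either complete or has two non-adjacent simplicial vertices, which in particular yields one simplicial vertex in every case. For the non-complete case, pick non-adjacent $a, b$, let $S$ be a minimal $a,b$-separator, and let $A$ be the component of $H - S$ containing $a$. Either $H[A \cup S]$ is a clique, in which case $a$ itself is simplicial in $H$ (its neighbors lie entirely in $A \cup S$), or by induction $H[A \cup S]$ has two non-adjacent simplicial vertices, at least one of which must lie in $A$ and hence is simplicial in $H$ as well. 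A symmetric argument on the side of $b$ produces the second simplicial vertex, completing the induction.
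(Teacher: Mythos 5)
Your proof is correct. Note that the paper itself gives no proof of this statement---it is imported from West's textbook as a citation---so there is nothing internal to compare against; your argument is the standard one (Dirac's): the easy direction via the highest-indexed vertex of an induced cycle, and the converse via the existence of a simplicial vertex, which in turn rests on the facts that minimal vertex separators of chordal graphs are cliques and that every non-complete chordal graph has two non-adjacent simplicial vertices. One detail you handled correctly that is easy to get backwards: this paper defines $N^{+}[v]$ as $v$ together with its \emph{back}-neighbors (lower-indexed neighbors), so a ``simplicial ordering'' here is the reverse of the usual perfect elimination ordering, and your choice to append the simplicial vertex at the \emph{end} of the ordering in the inductive step is exactly what the convention requires.
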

%
%
%
%
\begin{lemma}
\label{backneighbors}
Let $H$ be a chordal graph with clique number $\omega (H) = k+1$ and simplicial
ordering $\mathcal{L} = v_1 v_2 \ldots v_n $. Then for any vertex $x \in V(H)$,
$x$ has at most $k$ parents in $\mathcal{L}$.
\end{lemma}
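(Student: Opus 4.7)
The plan is to prove this as an immediate consequence of Theorem \ref{simplicial} combined with the bound on the clique number. Since $\mathcal{L}$ is a simplicial ordering, I would invoke Theorem \ref{simplicial} to conclude that the subgraph induced by $N^+[x]$ is a clique in $H$. This is the crucial structural fact; everything else is counting.

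Next, I would use the definition $N^+[x] = N^+(x) \cup \{x\}$ together with the fact that $x \notin N^+(x)$ to write $|N^+[x]| = |N^+(x)| + 1$, where $|N^+(x)|$ is exactly the number of parents of $x$. Since the induced subgraph on $N^+[x]$ is a clique and $\omega(H) = k+1$, we must have $|N^+[x]| \leq k+1$, and so $|N^+(x)| \leq k$.

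The argument is essentially a one-liner and I do not expect any real obstacle; the only thing to be careful about is keeping straight that $N^+(x)$ denotes the \emph{back}-neighbors (parents) in the notation set up in the definitions above, so that the vertex $x$ itself is not double-counted when bounding the clique size. The lemma then follows immediately.
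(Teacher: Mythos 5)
Your proof is correct and follows essentially the same route as the paper's: invoke the simplicial ordering to conclude that $N^+[x]$ induces a clique, bound $|N^+[x]|$ by $\omega(H) = k+1$, and subtract one for $x$ itself to get at most $k$ parents. No gaps.
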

\begin{proof}
If $\mathcal{L}$ is a simplicial ordering for $H$, then for each $x \in  V(H)$
the elements of $N^+[x]$ must form a clique. Since $\omega (H) = k+1$, $N^+[x]$
can have at most $k+1$ elements. Hence the set of parents of $x$, $N^+(x) =
N^+[x] \setminus \lbrace { x \rbrace }$ can have at most $k$ elements.
\end{proof}

\begin{theorem}
\label{k+3}
Let $H$ be a chordal graph with clique number $\omega(H) = k+1$. Then
$\chi_{\text{g}}^{(k)} (H) \leq k+3$.
\end{theorem}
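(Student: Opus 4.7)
The plan is to have Alice use the activation strategy of Section 2.2 with respect to a simplicial linear ordering $\mathcal{L}$ of $V(H)$, which exists by Theorem \ref{simplicial}. I will show a palette of $k+3$ colors suffices by bounding, at every moment Alice is about to color a vertex $u$, the number of \emph{forbidden} colors (those whose assignment to $u$ would create a monochromatic $(k+1)$-clique) by $k+2$.

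The first step is to classify the $(k+1)$-cliques containing $u$. By Theorem \ref{simplicial} and Lemma \ref{backneighbors}, every $(k+1)$-clique $K$ in $H$ equals $N^+[v_K]$ for its $\mathcal{L}$-maximum vertex $v_K$. Hence a $(k+1)$-clique containing $u$ is either of Type A --- $K = N^+[u]$, which requires $|N^+(u)| = k$ and contributes at most one forbidden color (the common color of $N^+(u)$) --- or of Type B --- $K = N^+[v]$ for some child $v$ of $u$ with $|N^+(v)| = k$, contributing the forbidden color $c(v)$ exactly when $v$ is colored and every vertex of $N^+(v) \setminus \{u\}$ is also colored $c(v)$.

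The next step is to refine Type B by $\maj(v)$. Since $u \in N^+(v)$ and $N^+[v]$ is a clique, $\maj(v)$ lies in $N^+[u]$. For each parent $p \in N^+(u)$, any colored child $v$ of $u$ with $\maj(v) = p$ has $p \in N^+(v) \setminus \{u\}$ colored $c(v)$, forcing $c(v) = c(p)$; so all such colored children collectively contribute only the single forbidden color $c(p)$. Combining with Type A: if $N^+(u)$ is monochromatic of color $c$, then Type A and every parent slot contribute the color $c$, for a combined total of one forbidden color; otherwise Type A contributes nothing while the parent slots contribute at most $|N^+(u)| \leq k$ forbidden colors. Either way, Type A and the parent slots together yield at most $k$ forbidden colors. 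For the remaining sub-case $\maj(v) = u$, I would invoke the invariant that under the activation strategy an uncolored vertex $w$ can be the major parent of at most two colored children at any time --- because the second activation of such a child terminates Alice's cascade at $w$, forcing her to color $w$ on that turn. This contributes at most two additional forbidden colors, bringing the grand total to at most $k+2$ and leaving at least one legal color in a palette of $k+3$.

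The main obstacle is verifying this last invariant on colored children with $\maj(v) = u$. Establishing it requires careful tracking of Alice's search-stage cascades: whenever a would-be third colored child with $\maj = u$ appears, its activation --- whether via Bob's coloring move or Alice's recursive step --- would trigger a cascade whose next vertex is $u$; since $u$ is already active from the cascade triggered by the previous such child, Alice would have colored $u$ on an earlier turn, contradicting that she is coloring $u$ at the present moment.
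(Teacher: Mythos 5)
Your proposal is correct, and it runs on the same engine as the paper's proof: the Activation Strategy relative to a simplicial ordering, the fact (via Theorem \ref{simplicial} and Lemma \ref{backneighbors}) that every $(k+1)$-clique equals $N^{+}[v]$ for its top vertex $v$, and the observation that at most two children with major parent $u$ can be activated while $u$ is uncolored, since the second such activation ends Alice's cascade at the already-active $u$ and she colors it. The difference is in the bookkeeping. The paper argues by contradiction on the extremal configuration of $k+3$ monochromatic cliques meeting only at $g$: by pigeonhole at least three of them avoid $N^{+}(g)$, their top vertices all have $g$ as mother, and two activations among those tops already force $g$ to be colored, so the third clique can never be completed. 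You instead bound the forbidden colors directly, partitioning the dangerous cliques $N^{+}[v]$ by the location of $\maj(v)$ in $N^{+}[u]$: each of the at most $k$ parents absorbs a single color (and absorbs the clique $N^{+}[u]$ itself), while the slot $\maj(v)=u$ contributes at most two by the activation invariant, for a total of $k+2 < k+3$. Your route is a little finer-grained and buys two things: it never needs the cliques to be pairwise disjoint, and it isolates the dynamic content of the argument in a single clean invariant (essentially the lemma the paper states but leaves commented out), so the counting applies verbatim to every uncolored vertex at every instant --- which is what is actually required, since Bob too must always have a legal move; you should phrase the conclusion that way rather than only ``when Alice is about to color $u$,'' though your argument already delivers it without modification. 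The paper's pigeonhole gets the same bound with less case analysis.
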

\begin{proof}
Assume Alice and Bob are playing the $k$-clique-relaxed coloring game with $k+3$
colors $\alpha_1, \ldots, \alpha_{k+3}$ on $H$. Alice will play using the
Activation Strategy. It suffices to show that for any uncolored vertex $v \in
V(H)$, the strategy provides a legal coloring for Alice. This is sufficient to
show that both players always have a valid move, as a legal coloring for Alice
is also a legal coloring for Bob. Let Alice pick a simplicial linear ordering
$\mathcal{L}$ of the vertices of H to use for the activation strategy.

Let $g$ be an uncolored vertex in $V(H)$. The only way for a vertex $g$ to have
no legal color is if $g$ belongs to $k+3$ $(k+1)$-cliques whose only pairwise
common vertex is $g$. Label these cliques $\mathcal{C}_1, \ldots,
\mathcal{C}_{k+3}$, where for each $i \in [k+3]$ all vertices in $\mathcal{C}_i$
except for $g$ are colored $\alpha_i$. We will now show that such a situation
can never arise.

By Lemma \ref{backneighbors}, since $\omega(H) = k+1$, $g$ can have at most $k$
back-neighbors. Since we have $k+3$ different cliques, there must be at least
three cliques $\mathcal{C}_{\alpha}, \mathcal{C}_{\beta}$, and
$\mathcal{C}_{\gamma}$ which contain no back-neighbors of $g$. Let $a,b,$ and
$c$ be the vertices of maximal index in $\mathcal{L}$ in $\mathcal{C}_{\alpha},
\mathcal{C}_{\beta}$, and $\mathcal{C}_{\gamma}$ respectively. Then $a,b$, and
$c$ must be forward-neighbors of all the vertices in their respective cliques.
However, any vertex in $H$ can have at most $k$ back-neighbors with respect to
$\mathcal{L}$. As a result, $a, b$, and $c$ can have no back-neighbors outside
of $\mathcal{C}_{\alpha}, \mathcal{C}_{\beta}$, and $\mathcal{C}_{\gamma}$
respectively. We may now conclude that $V(\mathcal{C}_{\alpha}) = N^+[a]$,
$V(\mathcal{C}_{\beta}) = N^+[b]$, and $V(\mathcal{C}_{\gamma}) = N^+[c]$.
Furthermore, since $g$ has no back-neighbors in $\mathcal{C}_{\alpha}$,
$\mathcal{C}_{\beta}$, or $\mathcal{C}_{\gamma}$, $g$ is the vertex of least
index in all three of these cliques. Hence when the first two of $a$, $b$, and
$c$ are activated, if $g$ is uncolored then Alice will take action on $g$ as
$m(a)=m(b)=m(c)=g$. As at most two actions can be taken on $g$, $g$ will be
colored before $a$, $b$, and $c$ can all be colored. Hence the situation where
$g$ is uncolorable will never arise and Alice can always win using the
Activation Strategy with $k+3$ colors.
\end{proof}

\begin{corollary}
\label{k+3corollary}
Let $G$ be a partial $k$-tree. Then the $k$-clique-relaxed game chromatic number
of $G$ is less than or equal to $k+3$. That is, $\chi_\text{g}^{(k)} (G) \leq
k+3$.
\end{corollary}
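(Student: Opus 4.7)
The plan is to reduce this corollary to Theorem~\ref{k+3} by having Alice play the game on $G$ while using the structure of a chordal completion to guide her moves. By definition of a partial $k$-tree, there exists a chordal graph $H$ with $V(H) = V(G)$, $E(G) \subseteq E(H)$, and $\omega(H) = k+1$. Alice fixes a simplicial ordering $\mathcal{L}$ of $H$, guaranteed by Theorem~\ref{simplicial}, and runs the Activation Strategy with respect to $\mathcal{L}$, computing all back-neighbor sets, parents, and mothers in $H$ rather than in $G$.

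The game itself is played on $G$: Bob's moves need only be legal in $G$, and Alice's chosen colors must likewise be legal in $G$. The key observation making this simulation work is that the search stage of the Activation Strategy depends only on which vertex was last colored, never on the color used. Thus Bob's freedom to play colors that would be illegal in $H$ (but are legal in $G$) never disrupts Alice's mother-chase or her activation bookkeeping. The only remaining task is to verify that Alice's coloring stage always has a legal color available in $G$ for the vertex she wishes to color.

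Suppose for contradiction that some uncolored $g \in V(G)$ runs out of legal colors in $G$. Then $g$ belongs to $k+3$ distinct $(k+1)$-cliques of $G$ meeting pairwise only at $g$, each monochromatically colored on its $k$ non-$g$ vertices using a distinct one of the available colors. Because $E(G) \subseteq E(H)$, each such clique is also a $(k+1)$-clique of $H$. The argument of Theorem~\ref{k+3} then applies verbatim with $H$ in place of the chordal graph of that theorem: Lemma~\ref{backneighbors} gives $g$ at most $k$ back-neighbors in $\mathcal{L}$, forcing three of the cliques $\mathcal{C}_\alpha,\mathcal{C}_\beta,\mathcal{C}_\gamma$ to contain no back-neighbors of $g$, and the activation-counting on their maximum-index vertices $a,b,c$ (each with $m(a)=m(b)=m(c)=g$ under $\mathcal{L}$ in $H$) forces $g$ to have been colored already, a contradiction.

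The main point to verify carefully is that Theorem~\ref{k+3}'s activation-counting argument survives being run against Bob's $G$-legal moves. The subtlety is that the evolving game state, when viewed as an attempted coloring of $H$, may at times fail to be legal in $H$, since Bob can paint two $H$-neighbors the same color whenever they happen not to be $G$-neighbors. Fortunately, the original argument never invoked the legality of the intermediate state in $H$; it relied only on the simplicial property of $\mathcal{L}$ with respect to $H$ and on the recursive behavior of the mother-chase in the Activation Strategy. Both ingredients remain intact when Alice plays on $G$ using $H$'s ordering and neighborhood data, so the reduction goes through without further work.
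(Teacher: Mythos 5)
Your proof is correct and takes the same route as the paper: embed $G$ in a chordal completion $H$ with $V(H)=V(G)$ and $\omega(H)=k+1$, and run the Activation Strategy of Theorem~\ref{k+3} with a simplicial ordering of $H$. You are in fact more careful than the paper's own short proof, which only remarks that Alice's $H$-legal moves remain legal in $G$; your observations that Bob's $G$-legal (but possibly $H$-illegal) moves cannot disrupt the search stage, and that the clique-counting argument of Theorem~\ref{k+3} never invokes legality of the intermediate coloring in $H$, make explicit exactly what the paper leaves implicit.
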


\begin{proof}

Assume Alice and Bob are playing the coloring game with $k+3$ colors $\alpha_1,
\ldots, \alpha_{k+3}$ on $G$. Since $G$ is a partial $k$-tree, it is the
subgraph of some chordal graph $H$  with clique number $\omega(H) \leq k+1$ and
with the same vertex set as $G$. Alice will play the $k$-clique-relaxed coloring
game on $G$ as though she were playing the game on $H$ using the activation
strategy. Hence any of Alice's moves on $H$ will still be valid in $G$, as the
extra edges in $H$ only decrease the number of legal moves available to her in
$G$. It suffices to show that for any uncolored vertex $v \in V(H)$, the
strategy provides a legal coloring for Alice, which is shown in Theorem
\ref{k+3}.
\end{proof}

Recall that every forest is a partial $1$-tree, and note that playing the
$1$-clique-relaxed coloring game is the same as playing the original coloring
game on a graph. The above result is a generalization of a result in
\cite{forests} where the authors show that at most $4$ colors are needed for
Alice to have a winning strategy on any given forest. Next we generalize a
result from a previous REU \cite{clique-relaxed} in which the authors found that
the $2$-clique-relaxed game chromatic number of an outerplanar graph is at most
$4$.

\begin{theorem}
\label{chordalfourbound}
If $H$ is a chordal graph with clique number $\omega(H) = 3$, then
$\chi_{\text{g}}^{(2)} (H) \leq 4$.
\end{theorem}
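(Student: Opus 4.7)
The plan is to mirror the structure of Theorem \ref{k+3}, exploiting the sharper back-neighbor bound provided by $\omega(H)=3$. I assume Alice plays the Activation Strategy on a simplicial ordering $\mathcal{L}$ of $H$ with colors $\alpha_1,\dots,\alpha_4$, and suppose for contradiction that the game reaches a state in which an uncolored vertex $g$ has no legal color. Then $g$ lies in four triangles $\mathcal{C}_1,\dots,\mathcal{C}_4$ pairwise sharing only $g$, with each $\mathcal{C}_i$ having its two non-$g$ vertices colored in a distinct color $\alpha_i$.

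By Lemma \ref{backneighbors}, $g$ has at most two back-neighbors, so since the $8$ non-$g$ vertices of the four triangles are pairwise disjoint, at most two triangles contain a back-neighbor of $g$; hence at least two triangles $\mathcal{C}_\alpha,\mathcal{C}_\beta$ contain none. Letting $a,b$ be the maximum-index vertices of $\mathcal{C}_\alpha,\mathcal{C}_\beta$ respectively, simpliciality together with Lemma \ref{backneighbors} applied to $a$ and to $b$ forces $N^+[a]=V(\mathcal{C}_\alpha)$ and $N^+[b]=V(\mathcal{C}_\beta)$, so $\maj(a)=\maj(b)=g$ and $m(a)=m(b)=g$ while $g$ is uncolored. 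As in the proof of Theorem \ref{k+3}, the first activations of $a$ and $b$ happen on distinct Alice moves (a single recursive chain descends monotonically via $m$ and cannot visit both), and each causes a visit to $g$: the first visit activates $g$, and at the second Alice would color it.

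The main obstacle is verifying that Alice actually has a legal color for $g$ at the second visit---in the $k+3$ argument, a third no-back-neighbor clique guaranteed a free color, and we no longer enjoy that slack. The finishing step I would use is a count: at the moment of the second visit, $g$ has at most four colored neighbors, namely at most two back-neighbors colored by Alice (her opening move on the least vertex of $\mathcal{L}$ and her chain's terminal during the first visit) together with at most two forward-neighbors colored by Bob (his two chain-triggering moves, the second of which creates the current visit). A forbidden color for $g$ corresponds to a pair of these colored neighbors that are adjacent in $H$ and share a color. A case analysis over the triangles through $g$, split into back-neighbor triangles, mixed back-and-forward triangles, and no-back-neighbor triangles, should show that at most three distinct colors are forbidden, provided Alice colors her chain-terminal back-neighbor with the same color as the least back-neighbor $p_1$ (which she colored first). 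This would leave at least one legal color for $g$ and contradict the failure hypothesis. The most delicate point, and where I expect the main difficulty, is establishing the bound of three in this case analysis: the simplicial structure forces all back-neighbor triangles through $g$ to share one of the at most two back-neighbors of $g$, and Alice's strategic coloring of back-neighbors must be shown to consolidate the colors of these shared back-neighbors so that back-neighbor triangles together contribute only one forbidden color rather than two.
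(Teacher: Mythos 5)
Your setup is sound and matches the paper's opening moves: four otherwise disjoint triangles through $g$, at most two back-neighbors of $g$ by Lemma \ref{backneighbors}, hence at least two triangles $\mathcal{C}_\alpha,\mathcal{C}_\beta$ containing no back-neighbor of $g$, whose maximal vertices have $g$ as major parent and hence as mother while $g$ is uncolored. You also correctly locate the difficulty: two guaranteed triggers only get $g$ activated once and colored at the second visit, and one must check that the second visit is not too late. But your finishing step rests on a false count. There is no reason that $g$ has at most four colored neighbors at the moment of the second visit: Bob is free to color any vertices he likes, and in the hard case where $g$'s two back-neighbors $a$ and $c$ lie in two \emph{different} triangles, the mothers of the four non-maximal forward-neighbors of $g$ (the partners $b,d$ of $a,c$ and the minimal vertices $e,h$ of the other two triangles) are $a$ or $c$, not $g$. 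So Bob can color all four of $b,d,e,h$; Alice's search chains then terminate at $a$ and $c$ and never reach $g$. After those four moves, $a$ and $c$ have each been acted on twice and are therefore colored by Alice, so six of the eight neighbors of $g$ are colored with zero visits to $g$. Your claimed bounds of ``two back-neighbors colored by Alice'' and ``two forward-neighbors colored by Bob'' both fail, and the subsidiary assumption that Alice's opening move or her chain terminals are the back-neighbors of $g$ has no basis. (The scenario above still ends well for Alice --- when Bob then colors the maximal vertex of a no-back-neighbor triangle, $g$ has become its own mother and is colored immediately, with one triangle still incomplete --- but that is established by tracking the activation order, not by your neighbor count.)

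The paper closes the gap differently: it splits into cases according to how $g$'s at most two back-neighbors are distributed among the four triangles. When both back-neighbors lie in at most one triangle, at least three maximal vertices have $g$ as major parent and the Theorem \ref{k+3} argument applies verbatim. In the remaining case ($a<c<g$ with $a,c$ in distinct triangles) the paper proves auxiliary claims showing that every activation in $N_g$ forces an action on the three-element set $N^+[g]=\{a,c,g\}$, and then does a careful bookkeeping of activation orders (subcases on whether $e$ and $h$ share a major parent, and on whether $a$ or $b$ is activated first) to conclude that $a$ and $c$ are colored early enough that a third action lands on $g$ before all eight surrounding vertices are activated. No strategic color choice is used anywhere --- the Activation Strategy as defined leaves the color arbitrary --- so your proposed rule that Alice match the colors of $g$'s back-neighbors is both outside the stated strategy and, as it stands, unsupported by a correct counting argument. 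To repair your proof you would need to replace the ``four colored neighbors'' count with an argument that actions accumulate on $N^+[g]$ as a whole, which is essentially the paper's route.
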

\begin{proof}
While playing the 2-clique-relaxed game on such a graph $H$, the only scenario
in which an uncolored vertex $g$ cannot be colored is when $g$ belongs to 4
otherwise disjoint 3-cliques where the vertices of each clique other than $g$
are monochromatically colored such that all four colors are used. Such a
situation can be seen in Figure \ref{fig:figure1}, where $c(a)=c(b)=1$,
$c(c)=c(d)=2$, $c(e)=c(f)=3$, and $c(h)=c(i)=4$. We define $N_g =
\{a,b,...,g,...,i \}$.

\begin{figure}[here]
\caption{}
\label{fig:figure1}
\begin{center}
\begin{tikzpicture}[auto, thick,
  main node/.style={circle,fill=white!10,draw,font=\sffamily\tiny\bfseries},
  colored node/.style={circle,fill=gray!10,draw,font=\sffamily\tiny\bfseries},
  ]

 \def \n {6}
 \def \radius {1.5cm}
 \def \margin {8} 
  \node[main node] (G) at (0,0) {g};
  \foreach \s in {1,...,\n}
  {
   \node[colored node] (\s) at ({360/8 * (\s - 1) + 360/4 + 360/16}:\radius) {\makeAlph{\s}};
    \path
   (G) edge node {} (\s);
  }
  \node[colored node] (7) at ({360/8 * (6) + 360/4 + 360/16}:\radius) {h};
  \node[colored node] (8) at ({360/8 * (7) + 360/4 + 360/16}:\radius) {i};
  
  \node[above] at (1.+60) {\tiny{$\alpha_1$}};
  \node[above] at (2.+60) {\tiny{$\alpha_1$}};
  \node[below] at (3.-100) {\tiny{$\alpha_2$}};
  \node[below] at (4.-100) {\tiny{$\alpha_2$}};
  \node[below] at (5.-100) {\tiny{$\alpha_3$}};
  \node[below] at (6.-100) {\tiny{$\alpha_3$}};
  \node[below] at (7.+200) {\tiny{$\alpha_4$}};
  \node[right] at (8.+60) {\tiny{$\alpha_4$}};
  
  \path
  (G) edge node {} (7)
  (G) edge node {} (8)
  (1) edge node {} (2)
  (3) edge node {} (4)
  (5) edge node {} (6)
  (7) edge node {} (8);
  
\end{tikzpicture}
\end{center} 
\end{figure}

Assume that Alice plays using the activation strategy, using a simplicial
ordering $\mathcal{L}$ of the vertices of $H$. By Lemma \ref{backneighbors},
each vertex can have at most $2$ backneighbors. This leaves us with a number of
possibilities for the placement of $g$ in the indexing of ${N_g = \{a,b,\dots,g,
\dots, i \}}$. We assume without loss of generality, $a < b$, $c < d$, $e < f$,
and $h < i$ in this ordering. First, consider the case where $g$ has the least
index in $N_g$. By Lemma \ref{backneighbors}, $g$ must then be the major parent
of all vertices in $\{b, d, f, i\}$ as they cannot have any other parents
outside of $N_g$ without violating the maximum clique size of $g$. Thus,
whenever a vertex in $\{b, d, f, i\}$ is activated, Alice takes action on $g$ so
long as it is uncolored. As a result, $g$ will be colored before all four of
these vertices have been activated, and Alice can legally color $g$.

Now assume that $g$ is not the vertex of least index in $N_g$. Without loss of
generality, assume that $a$ is the vertex of least index in $N_g$. Similar to
the previous case, if $a<g$ are the two vertices of least index in $N_g$, then
$\{d, f, i \}$ all have $g$ as their major parent. Activation on any of these
three vertices will then cause Alice to act on $g$, and Bob cannot cause $g$ to
be uncolorable. Now, if $a<b<g$ are the vertices of least index in $N_g$, then
the activation of any vertex in $\{d, f, i \}$ will cause Alice to take action
on $g$. This again ensures that Alice can always properly color $g$.

Now, our remaining case is one in which the two back-neighbors allowed to $g$
both belong to different cliques. Assume without loss of generality that $a<c<g$
are the vertices of least index in $N_g$ with respect to our simplicial linear
ordering, as depicted in Figure \ref{fig:figure2}. Note that since $N^+[g]$ must
form a clique, then $a \adj c$ and $a$ is a parent of $c$. We now consider a
number of Claims.

\begin{figure}[here]
\caption{$a<c<g$}
\label{fig:figure2}
\begin{center}
\begin{tikzpicture}[auto, thick,
  main node/.style={circle,fill=white!10,draw,font=\sffamily\tiny\bfseries},
  colored node/.style={circle,fill=gray!10,draw,font=\sffamily\tiny\bfseries},
  ]

 \def \n {6}
 \def \radius {1.5cm}
 \def \margin {8} 
  \node[main node] (G) at (0,0) {g};
  \foreach \s in {4,...,\n}
  {
   \node[main node] (\s) at ({360/8 * (\s - 1) + 360/2 + 360/16}:\radius) {\makeAlph{\s}};
    \path
   (\s) edge [->] node {} (G);
  }
  \node[main node] (1) at ({360/8 * (1) + 360/2 + 360/16}:\radius) {a};
  \node[main node] (2) at ({360/8 * (0) + 360/2 + 360/16}:\radius) {b};
  \node[main node] (3) at ({360/8 * (2) + 360/2 + 360/16}:\radius) {c};
  
  \node[main node] (7) at ({360/8 * (6) + 360/2 + 360/16}:\radius) {h};
  \node[main node] (8) at ({360/8 * (7) + 360/2 + 360/16}:\radius) {i};

  \path
  (7) edge [->] node {} (G)
  (8) edge [->] node {} (G)
  (G) edge [->] node {} (1)
      edge [->] node {} (3)
  (2) edge [->] node {} (G)     
  (2) edge [->] node {} (1)
  (3) edge [->] node {} (1)
  (4) edge [->] node {} (3)
  (6) edge [->] node {} (5)
  (8) edge [->] node {} (7);
  
\end{tikzpicture} 
\vspace{5mm}

\begin{tikzpicture}[auto, thin, scale=1.75,
  main node/.style={circle,inner sep=1.25mm,fill=white!10,draw,font=\sffamily\tiny\bfseries},
  colored node/.style={circle,fill=gray!10,draw,font=\sffamily\tiny\bfseries},
  ]
 \def \n {9}
 \def \radius {1.5cm}
 \def \margin {1} 
  \foreach \s/\na in {1/a,2/c,3/g,4/b,5/d,6/e,7/f,8/h,9/i}
  {
   \node[main node] (\s) at (\intcalcMul{\s}{\margin},0) {\na};
  }
  \path
  (2) edge [->] node {} (1)
  (3) edge [->,bend right] node {} (1)
      edge [->] node {} (2)
  (4) edge [->,bend right] node {} (1)
      edge [->] node {} (3)
  (5) edge [->,bend left] node {} (3)
      edge [->,bend right] node {} (2)
  (6) edge [->,bend left] node {} (3)
  (7) edge [->,bend left] node {} (3)
      edge [->] node {} (6)
  (8) edge [->,bend left] node {} (3)
  (9) edge [->,bend left] node {} (3)
      edge [->] node {} (8);
  
\end{tikzpicture}  
\end{center} 
\end{figure}

\begin{claim}
\label{claim1}
If $a<c<g$, then $g$ is the major parent of $f$ and $i$.
\end{claim}
\begin{proof}
Note that $f > e > g$ and $i > h > g$. By Lemma \ref{backneighbors}, $f$ and
$i$ have at most two back-neighbors. Since $f$ is adjacent to $e$ and $g$, and
$i$ is adjacent to $h$ and $g$, then we must have $N^{+} (f) = \lbrace { g, e
\rbrace }$ and $N^{+} (i) = \lbrace { g, h \rbrace }$. Since $g$ is the minimum
of both sets, we get $\maj(f) = g$ and $\maj(i) = g$.
\end{proof}

\begin{claim}
\label{claim2}
When $a<c<g$, if $a$ is colored before $b$ is activated, or $c$ is colored
before $d$ is activated, $g$ will be legally colored before all vertices in $N_g
\setminus \{ g \}$ are activated.
\end{claim}
\begin{proof}
Assume $a$ is colored before $b$ is activated. When $b$ is finally activated,
its mother $m(b)$ will also be acted upon if $m(b)$ exists. By Lemma
\ref{backneighbors}, $b$ can have at most two back-neighbors. As $b > g > a$
with $b$ adjacent to $g$ and $a$, we can conclude that $g$ and $a$ are the two
back-neighbors of $b$ and $N^{+}(b) = \lbrace { a,g \rbrace }$. The vertex $g$
is uncolored and is thus a candidate for $m(b)$, so we can conclude that $m(b)$
must exist. Since $a$ is already colored, we must have $m(b)=g$. As a result,
when $b$ is activated, action will be taken on $g$ by Alice. Thus, when the
first two active vertices among $b$, $f$, and $i$ are activated, $g$ will have
had two actions taken upon it by Alice. Since only two actions can be taken on a
vertex --- activation and coloring --- $g$ will have been colored. Hence, $g$
will have been colored before all eight surrounding vertices have been
activated. On the other hand, if $c$ is colored before $d$ is activated, then by
a similar argument $g$ will be colored before all vertices in $N_g \setminus \{
g \}$ have been activated.
\end{proof}

Thus by \ref{claim2}, if $a$ is colored before $b$ is activated, or $c$ is
colored before $d$ is activated, then Alice will win. So we will assume from now
onwards that $b$ is activated before $a$ is colored, and $d$ is activated before
$c$ is colored.
Note that $g$ is a candidate for the major parent of both $e$ and $h$. If $g$ is
the major parent of either $e$ or $h$, then we will have three vertices in $N_g
\setminus \{g\}$ which when activated will cause Alice to act on $g$. Assume now
that both $e$ and $h$ have major parents less than $g$. In this case, if $e$ or
$h$ had a major parent $x$ outside of $N_g$, then $x$ would be adjacent to $g$
by Theorem \ref{simplicial}. Hence $g$ would have 3 back-neighbors --- $a$, $c$,
and $x$ --- violating Lemma \ref{backneighbors}. As a result, we can conclude
that $e$ and $h$ both have major parents in $\{a, c\}$. We must now explore two
subcases associated with this new fact. Notice that both subgraphs in Figure
\ref{fig:figure3} are included in the subcase where we assume that $h$ and $e$
share a major parent in $\{a, c\}$, while Figure \ref{fig:figure4} represents
the subcase where $h$ and $e$ do not share a major parent.

\begin{figure}[here]
\caption{$a<c<g$, $h$ and $e$ share a major parent.}
\label{fig:figure3}
\begin{multicols}{2}
\begin{center}
\begin{tikzpicture}[auto, thick,
  main node/.style={circle,fill=white!10,draw,font=\sffamily\tiny\bfseries},
  colored node/.style={circle,fill=gray!10,draw,font=\sffamily\tiny\bfseries},
  ]

 \def \n {6}
 \def \radius {1.5cm}
 \def \margin {8} 
  \node[main node] (G) at (0,0) {g};
  \foreach \s in {4,...,\n}
  {
   \node[main node] (\s) at ({360/8 * (\s - 1) + 360/2 + 360/16}:\radius) {\makeAlph{\s}};
    \path
   (\s) edge [->] node {} (G);
  }
  \node[main node] (1) at ({360/8 * (1) + 360/2 + 360/16}:\radius) {a};
  \node[main node] (2) at ({360/8 * (0) + 360/2 + 360/16}:\radius) {b};
  \node[main node] (3) at ({360/8 * (2) + 360/2 + 360/16}:\radius) {c};
  
  \node[main node] (7) at ({360/8 * (6) + 360/2 + 360/16}:\radius) {h};
  \node[main node] (8) at ({360/8 * (7) + 360/2 + 360/16}:\radius) {i};

  \path
  (7) edge [->] node {} (G)
  (8) edge [->] node {} (G)
  (G) edge [->] node {} (1)
      edge [->] node {} (3)
  (2) edge [->] node {} (G)     
  (2) edge [->] node {} (1)
  (3) edge [->] node {} (1)
  (4) edge [->] node {} (3)
  (6) edge [->] node {} (5)
  (8) edge [->] node {} (7)
  (7) edge [->,dashed] node {} (1)
  (5) edge [->,dashed] node {} (1);
  
\end{tikzpicture}

  \vspace{5mm}

  \begin{tikzpicture}[auto, thin, scale=0.85,
  main node/.style={circle,inner sep=.75mm,fill=white!10,draw,font=\sffamily\tiny\bfseries},
  colored node/.style={circle,fill=gray!10,draw,font=\sffamily\tiny\bfseries},
  ]
 \def \n {9}
 \def \radius {1.5cm}
 \def \margin {1} 
  \foreach \s/\na in {1/a,2/c,3/g,4/b,5/d,6/e,7/f,8/h,9/i}
  {
   \node[main node] (\s) at (\intcalcMul{\s}{\margin},0) {\na};
  }
  \path
  (2) edge [->] node {} (1)
  (3) edge [->,bend right] node {} (1)
      edge [->] node {} (2)
  (4) edge [->,bend right] node {} (1)
      edge [->] node {} (3)
  (5) edge [->,bend left] node {} (3)
      edge [->,bend right] node {} (2)
  (6) edge [->,bend left] node {} (3)
  (7) edge [->,bend left] node {} (3)
      edge [->] node {} (6)
  (8) edge [->,bend left] node {} (3)
  (9) edge [->,bend left] node {} (3)
      edge [->] node {} (8)
  (8) edge [->,bend right,dashed] node {} (1)
  (6) edge [->,bend right,dashed] node {} (1);
  
\end{tikzpicture}

 \end{center}
 \columnbreak
 \begin{center}
 
 \begin{tikzpicture}[auto, thick,
  main node/.style={circle,fill=white!10,draw,font=\sffamily\tiny\bfseries},
  colored node/.style={circle,fill=gray!10,draw,font=\sffamily\tiny\bfseries},
  ]

 \def \n {6}
 \def \radius {1.5cm}
 \def \margin {8} 
  \node[main node] (G) at (0,0) {g};
  \foreach \s in {4,...,\n}
  {
   \node[main node] (\s) at ({360/8 * (\s - 1) + 360/2 + 360/16}:\radius) {\makeAlph{\s}};
    \path
   (\s) edge [->] node {} (G);
  }
  \node[main node] (1) at ({360/8 * (1) + 360/2 + 360/16}:\radius) {a};
  \node[main node] (2) at ({360/8 * (0) + 360/2 + 360/16}:\radius) {b};
  \node[main node] (3) at ({360/8 * (2) + 360/2 + 360/16}:\radius) {c};
  
  \node[main node] (7) at ({360/8 * (6) + 360/2 + 360/16}:\radius) {h};
  \node[main node] (8) at ({360/8 * (7) + 360/2 + 360/16}:\radius) {i};

  \path
  (7) edge [->] node {} (G)
  (8) edge [->] node {} (G)
  (G) edge [->] node {} (1)
      edge [->] node {} (3)
  (2) edge [->] node {} (G)     
  (2) edge [->] node {} (1)
  (3) edge [->] node {} (1)
  (4) edge [->] node {} (3)
  (6) edge [->] node {} (5)
  (8) edge [->] node {} (7)
  (7) edge [->,bend right,dashed] node {} (3)
  (5) edge [->,dashed] node {} (3);
  
\end{tikzpicture}
 
 \vspace{6.75mm}

 \begin{tikzpicture}[auto, thin, scale=0.85,
  main node/.style={circle,inner sep=.75mm,fill=white!10,draw,font=\sffamily\tiny\bfseries},
  colored node/.style={circle,fill=gray!10,draw,font=\sffamily\tiny\bfseries},
  ]
 \def \n {9}
 \def \radius {1.5cm}
 \def \margin {1} 
  \foreach \s/\na in {1/a,2/c,3/g,4/b,5/d,6/e,7/f,8/h,9/i}
  {
   \node[main node] (\s) at (\intcalcMul{\s}{\margin},0) {\na};
  }
  \path
  (2) edge [->] node {} (1)
  (3) edge [->,bend right] node {} (1)
      edge [->] node {} (2)
  (4) edge [->,bend right] node {} (1)
      edge [->] node {} (3)
  (5) edge [->,bend left] node {} (3)
      edge [->,bend right] node {} (2)
  (6) edge [->,bend left] node {} (3)
  (7) edge [->,bend left] node {} (3)
      edge [->] node {} (6)
  (8) edge [->,bend left] node {} (3)
  (9) edge [->,bend left] node {} (3)
      edge [->] node {} (8)
  (8) edge [->,bend right,dashed] node {} (2)
  (6) edge [->,bend right,dashed] node {} (2);
  
\end{tikzpicture}  
 
\end{center} 
\end{multicols} 
\end{figure}
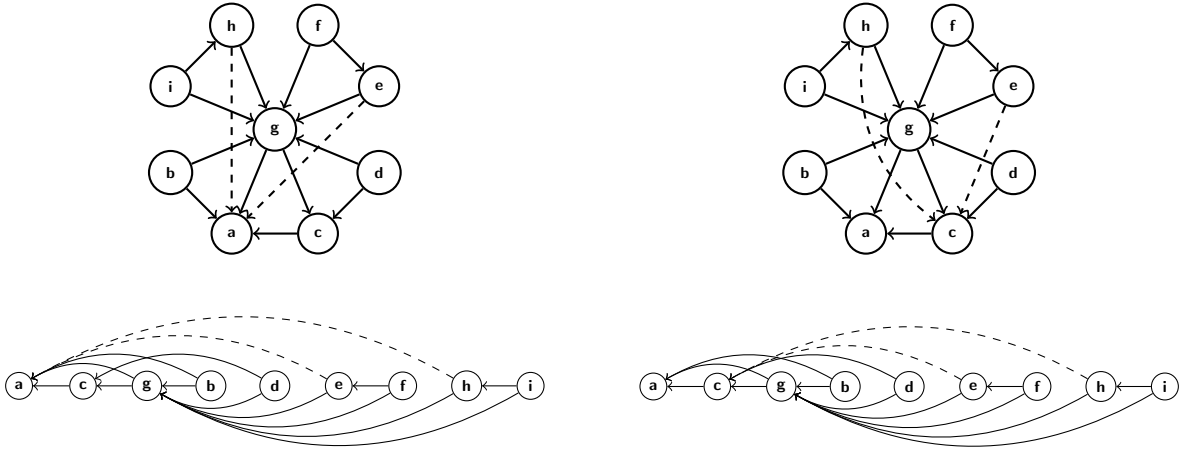

\newtheorem*{case}{Case: $\maj(h) = \maj(e)$}
\begin{case}
\label{separatemajorparents}
If $a<c<g$ and $h$ and $e$ both share the same major parent, then $g$ will be
colored before all the vertices of $N_g \setminus \lbrace{ g \rbrace }$ have
been activated.
\end{case}

\begin{proof}

Since $\maj (h) = c$ or $\maj (h) = a$, let us first assume that $\maj(h) = c$
and $\maj(e) = c$. As $\maj(e)=\maj(h)=\maj(d)=c$, by the time two vertices in
$\{d,e,h\}$ are activated $c$ will be colored. Now consider the last element to
be activated in the set $\lbrace { d, e, h \rbrace }$. That element must then
have $g$ as a mother, as $g$ is a parent to all elements in $\lbrace { d, e, h
\rbrace }$. Thus, when the last element is activated, Alice will act on $g$. As
a result, the activation of $i$, $f$, and the last element of $\{d,e,h \}$ to be
activated will all cause Alice to act on $g$, and $g$ will have been acted upon
twice and hence colored. Thus, $g$ will be colored before all of its surrounding
vertices have been activated. Note that the same argument holds when considering
$\maj(h)= a = \maj(e)$, by replacing $c$ with $a$ and $d$ with $b$.
\end{proof}

We have just shown that if $h$ and $e$ both share the same major parent, then as
long as Alice follows the activation strategy, $g$ will be colored before all of
its surrounding neighbors have been be activated. Now we must focus on the
remaining subcase, in which $h$ and $e$ have different major parents.

\newtheorem*{case2}{Case: $\maj(h) \neq \maj(e)$}
\begin{case2}
If $a<c<g$, and $h$ and $e$ do not share the same major parent, then $g$ will be
colored before all the vertices of $N_g \setminus \lbrace{ g \rbrace }$ have
been activated.
\end{case2}
As we have made no assumptions about the relative ordering of $e$ and $h$, we
can assume without loss of generality that $a = \maj (h)$ and $c = \maj (e)$.
The visual representation of this case can be see in Figure \ref{fig:figure4}.

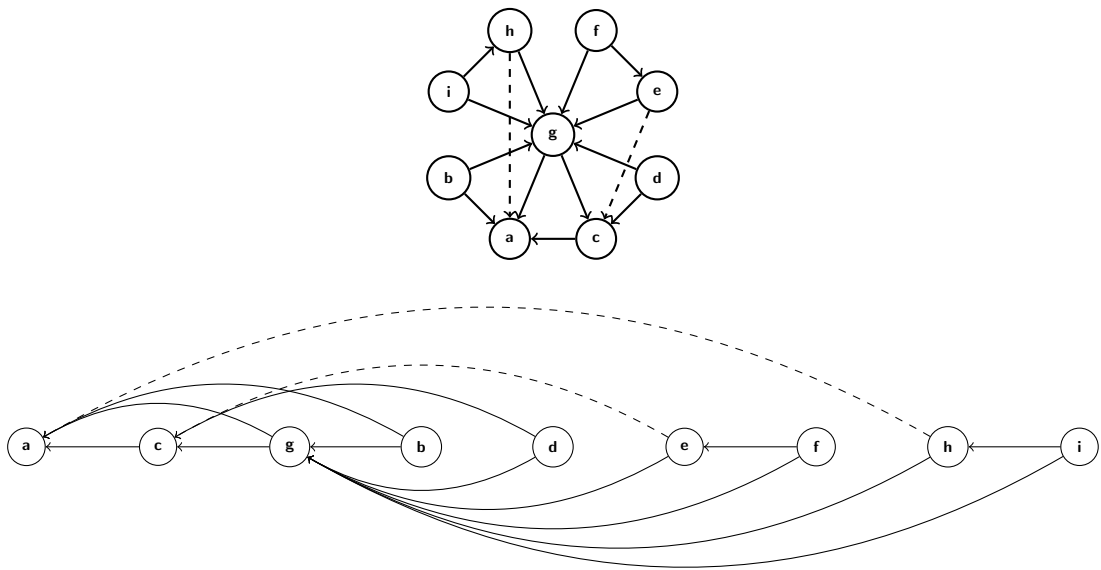
\begin{figure}[here]
\caption{$a<c<g$, $h$ and $e$ do not share a major parent.}
\label{fig:figure4}
\begin{center}
 \begin{tikzpicture}[auto, thick,
  main node/.style={circle,fill=white!10,draw,font=\sffamily\tiny\bfseries},
  colored node/.style={circle,fill=gray!10,draw,font=\sffamily\tiny\bfseries},
  ]

 \def \n {6}
 \def \radius {1.5cm}
 \def \margin {8} 
  \node[main node] (G) at (0,0) {g};
  \foreach \s in {4,...,\n}
  {
   \node[main node] (\s) at ({360/8 * (\s - 1) + 360/2 + 360/16}:\radius) {\makeAlph{\s}};
    \path
   (\s) edge [->] node {} (G);
  }
  \node[main node] (1) at ({360/8 * (1) + 360/2 + 360/16}:\radius) {a};
  \node[main node] (2) at ({360/8 * (0) + 360/2 + 360/16}:\radius) {b};
  \node[main node] (3) at ({360/8 * (2) + 360/2 + 360/16}:\radius) {c};
  
  \node[main node] (7) at ({360/8 * (6) + 360/2 + 360/16}:\radius) {h};
  \node[main node] (8) at ({360/8 * (7) + 360/2 + 360/16}:\radius) {i};

  \path
  (7) edge [->] node {} (G)
  (8) edge [->] node {} (G)
  (G) edge [->] node {} (1)
      edge [->] node {} (3)
  (2) edge [->] node {} (G)     
  (2) edge [->] node {} (1)
  (3) edge [->] node {} (1)
  (4) edge [->] node {} (3)
  (6) edge [->] node {} (5)
  (8) edge [->] node {} (7)
  (7) edge [->,dashed] node {} (1)
  (5) edge [->,dashed] node {} (3);
  
\end{tikzpicture}

\begin{tikzpicture}[auto, thin, scale=1.75,
  main node/.style={circle,inner sep=1.25mm,fill=white!10,draw,font=\sffamily\tiny\bfseries},
  colored node/.style={circle,fill=gray!10,draw,font=\sffamily\tiny\bfseries},
  ]
 \def \n {9}
 \def \radius {1.5cm}
 \def \margin {1} 
  \foreach \s/\na in {1/a,2/c,3/g,4/b,5/d,6/e,7/f,8/h,9/i}
  {
   \node[main node] (\s) at (\intcalcMul{\s}{\margin},0) {\na};
  }
  \path
  (2) edge [->] node {} (1)
  (3) edge [->,bend right] node {} (1)
      edge [->] node {} (2)
  (4) edge [->,bend right] node {} (1)
      edge [->] node {} (3)
  (5) edge [->,bend left] node {} (3)
      edge [->,bend right] node {} (2)
  (6) edge [->,bend left] node {} (3)
  (7) edge [->,bend left] node {} (3)
      edge [->] node {} (6)
  (8) edge [->,bend left] node {} (3)
  (9) edge [->,bend left] node {} (3)
      edge [->] node {} (8)
  (8) edge [->,dashed,bend right] node {} (1)      
  (6) edge [->,dashed,bend right] node {} (2);
\end{tikzpicture}  

\end{center} 
\end{figure}

\begin{claim}
When $a<c<g$ and $h$ and $e$ do not share the same major parent, if $a$ is
activated before $b$ is activated, then $g$ will be colored before all the
vertices in $N_g \setminus \{ g \}$ have been activated.
\end{claim}

\begin{proof}

Assume $a$ is activated before $b$ is activated. Considering that $a$ will be
acted upon in response to the activation of $h$, $i$, and $f$, if two or more of
these three vertices are activated before $b$ is activated, $a$ will have had at
least two actions taken on it by Alice before $b$ is activated. This means that
$a$ will have been colored before $b$ is activated, which runs contrary to our
condition that $b$ be activated before $a$ is colored. Hence, at most one
element of $\lbrace { h, i, f \rbrace }$ may be activated before $b$ (Claim
\ref{claim2}).

If $h$ is activated after $a$ is colored, then Alice will take action on $g$.
This ensures that Alice will win, since the activation of any vertices in $\{h,
i, f \}$ will cause Alice to act on $g$. So assume $h$ is activated before $a$
is colored. The only way this may occur is if $h$ is activated before either $i$
or $f$ are activated, as we have already shown that $b$ must be activated before
all elements of $\{i,f\}$ are activated. Thus, $h$ and $b$ are the first two
elements of $\lbrace {b, h, i, f \rbrace }$ to be activated, by which time $a$
will be colored. Hence once the third element of this set, be it $i$ or $f$, is
activated, both $g$ and $c$ will immediately be activated.

Now, considering that the major parent of $e$ is $c$, when $e$ is activated it
must be the case that $c$ is activated if uncolored, or $g$ is activated because
$c$ is already colored. Hence, after the first three elements of $\lbrace { b,
e, h, i, f \rbrace }$ are activated, we will have $a$ colored and $c$ active
(with $c$ possibly colored). Furthermore, when $d$ is activated, Alice will
either take action on $c$ if $c$ is uncolored or on $g$ if $c$ is colored. By
the time that the first four vertices of $\lbrace { b, e, d, h, i, f \rbrace }$
are activated, if $g$ has not been colored then we must have $a$ and $c$ both
colored. No matter what the fifth element of $\lbrace { b, e, d, h, i, f \rbrace
}$ is to be activated, its activation must in turn cause Alice to take action on
$g$. But since $g$ will then be its own mother, Alice will immediately color $g$
on that very same turn. Thus, Alice will have colored $g$ before all eight
surrounding vertices have been activated.

\end{proof}

\begin{claim}
When $a<c<g$ and $h$ and $e$ do not share the same major parent, if $a$ is
activated after $b$ is activated, then $g$ will be colored before all vertices
in $N_g \setminus \lbrace{ g \rbrace }$ have been activated.
\end{claim}
 
\begin{proof}
Since $b$ is activated before $a$, none of the elements of $\lbrace{ h, i, f
\rbrace}$ may be activated before $b$, as the activation of any of these
vertices would in turn cause Alice to act on $a$. However, if $b$ is the first
to be activated, Alice will then immediately activate $a$. Hence, when the first
two elements of $\lbrace{ b, h, i, f \rbrace}$ are activated, $a$ will
immediately be colored. By the time the first three elements of $\lbrace{ b, h,
i, f \rbrace}$ are activated, then $a$ will be colored, and $g$ will be active,
as well as $c$ if $g$ remains uncolored.

Since $e$ and $a$ are not neighbors, when $e$ is activated Alice will either
take action on $c$ if uncolored, or $g$ if $c$ is uncolored. Furthermore, when
$d$ is activated, $c$ will then be immediately activated if it was not already.
Now consider the first three elements to be activated in the set $\lbrace{ h, i,
f , d, e \rbrace}$. At least one of these elements must belong in the subset
$\lbrace{h, i, f \rbrace}$, but $b$ must be activated before any element of this
subset is activated.

As a result, by the time the first three elements of $\lbrace{ h, i, f , d, e
\rbrace}$ are activated, $b$ will have already been activated (as a result of
our hypothesis) and we will have four actions taken on $a$ and $c$. This means
that $a$ and $c$ will be colored before the fourth element of $\lbrace{ h, i, f
, d, e \rbrace}$ is activated. The activation of the fourth element of this set
will cause Alice to take action on $g$, at which point $g$ will be its own
mother. At this time Alice will color $g$, while the fifth element of $\lbrace{
h, i, f , d, e \rbrace}$ remains inactive.
\end{proof}

Thus, we have shown that no matter the case, so long as Alice follows the
activation strategy, Alice will always color a vertex before it no longer has a
legal color.
\phantom\qedhere
\end{proof}

\begin{corollary}
\label{chordalfourboundcorollary}
If Alice is playing the 2-clique-relaxed game on a partial $2$-tree $G$, then
Alice can always win with  $4$ colors.
\end{corollary}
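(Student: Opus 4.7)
The plan is to reduce this statement to Theorem \ref{chordalfourbound} in exactly the same way that Corollary \ref{k+3corollary} reduces to Theorem \ref{k+3}. By the definition of a partial $2$-tree, there exists a chordal graph $H$ with $V(H) = V(G)$, $E(G) \subseteq E(H)$, and $\omega(H) \leq 3$. Alice will fix such an $H$, choose a simplicial ordering $\mathcal{L}$ of $V(H)$ (which exists by Theorem \ref{simplicial}), and then play the activation strategy on $G$ exactly as if she were playing the $2$-clique-relaxed game on $H$.

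The key observation is that any monochromatic $3$-clique in $G$ under a partial coloring is also a monochromatic $3$-clique in $H$, since every edge of $G$ is an edge of $H$. Equivalently, every $2$-clique-relaxed legal coloring of the colored vertices in $H$ restricts to a $2$-clique-relaxed legal coloring of those same vertices in $G$. So when it is Alice's turn and her strategy on $H$ prescribes a color $\alpha$ for a vertex $u$, that same color $\alpha$ on $u$ is legal for her move on $G$. Bob's moves on $G$ may use colorings that would have been illegal on $H$ (since $H$ has more edges), but Alice can still treat each such move as if Bob had played it on $H$: the activation bookkeeping in her strategy depends only on $\mathcal{L}$ and on which vertices are colored, not on the legality of Bob's play in $H$.

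Thus the only thing to verify is that Alice's strategy, run as in Theorem \ref{k+3} and Theorem \ref{chordalfourbound}, always supplies her with a legal color for $u$ in the graph $H$ (and hence in $G$). This is precisely what Theorem \ref{chordalfourbound} establishes for chordal graphs of clique number at most $3$ with $4$ colors. I do not expect any genuine obstacle here; the only point requiring a small amount of care is the remark that Bob is playing on $G$ rather than $H$, which is handled by the monotonicity observation above.
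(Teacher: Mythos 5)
Your proposal is correct and follows essentially the same route as the paper: embed $G$ in a chordal graph $H$ with the same vertex set and clique number $3$, have Alice play the Activation Strategy as if on $H$, and invoke Theorem \ref{chordalfourbound}. Your explicit remark that Bob's moves on $G$ need not be legal in $H$ but that the activation bookkeeping only depends on $\mathcal{L}$ and on which vertices are colored is a point the paper leaves implicit, and it is handled correctly.
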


\begin{proof}
Assume Alice and Bob are playing the coloring game on $G$. Since $G$ is a
partial $2$-tree, it is the subgraph of some chordal graph $H$ with clique
number $\omega(H) = 3$ and with the same vertex set as $G$. Alice will play the
$2$-clique-relaxed coloring game on $G$ as though she were playing the game on
$H$ using the Activation Strategy. As before, Alice's moves on $H$ will still be
valid in $G$. It suffices to show that for any uncolored vertex $v \in V(H)$,
the strategy provides a legal coloring for Alice, which is shown in Theorem
\ref{chordalfourbound} for 4 colors.
\end{proof}

Recall that every outerplanar graph is a partial $2$-tree. The above result is
hence a generalization of the result in \cite{clique-relaxed} where the authors
showed that for any outerplanar graph $G$, $\chi_\text{g}^{(2)}(G) \leq 4$. Now
we parametrize the clique-size of a chordal graph and the number of colors used
in the $k$-clique-relaxed game in order to provide a more general result
applicable to more forms of gameplay.

\begin{theorem}
\label{ubercooltheorem}
For any triplet of integers $c$, $k$ and $\omega$ such that $ck - 3 \omega + 1 >
0$ and $\omega > k$, if Alice and Bob play the $k$-clique-relaxed coloring game
on a chordal graph $H$ with clique number $\omega(H)=\omega$, then Alice will
win using the Activation Strategy with $c$ colors. That is, \[
\chi_\text{g}^{(k)}(H) \leq \Bigg{\lfloor} \frac{3 \omega - 1}{k} \Bigg\rfloor +
1.\]
\end{theorem}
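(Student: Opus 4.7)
The plan is to argue by contradiction, generalizing the counting argument from Theorem~\ref{k+3}. Suppose that at some point in the game, an uncolored vertex $g$ has no legal color. Then $g$ must be the common vertex of $c$ monochromatic $(k+1)$-cliques $\mathcal{C}_1, \ldots, \mathcal{C}_c$, one per color, which are pairwise vertex-disjoint outside of $g$ since each colored vertex has exactly one color. Write $K_i = \mathcal{C}_i \setminus \{g\}$ and $B = N^+(g)$, so that $\sum_i |K_i| = ck$ and, by Lemma~\ref{backneighbors}, $|B| \le \omega - 1$. Since each back-neighbor of $g$ lies in at most one $K_i$, at least $c - (\omega - 1)$ of the cliques $K_i$ are \emph{all-forward}, meaning entirely contained in $N^-(g)$.

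For each all-forward clique $K_i$, let $v_i$ be its $\mathcal{L}$-maximum vertex. The simplicial property (Theorem~\ref{simplicial}) forces $N^+[v_i]$ to be a clique of size at most $\omega$, which already contains the $k+1$ vertices $\{v_i,g\} \cup (K_i \setminus \{v_i\})$; hence $v_i$ has at most $\omega - 1 - k$ \emph{extra} back-neighbors, each adjacent to $g$ by simpliciality, and each lying in $B$ precisely when less than $g$. I call $K_i$ a \emph{useful trigger} when $v_i$ has no extra back-neighbor in $B$: for then $\maj(v_i) = g$, so $m(v_i) = g$ as long as $g$ remains uncolored, and any activation of $v_i$ forces Alice's Recursive Step to take an action on $g$. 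Since Alice can take at most two actions on $g$ (first activating, then coloring), the existence of three useful triggers in distinct cliques guarantees that $g$ is colored before the third $v_i$ is activated, and hence before the cliques containing those triggers can all be completed---contradicting the hypothesis that $g$ ever becomes stuck.

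The main obstacle is extracting three useful triggers from the hypothesis $ck - 3\omega + 1 > 0$, equivalently $ck \ge 3\omega$. This is essentially trivial when $\omega = k+1$ (every all-forward clique is automatically useful, recovering Theorem~\ref{k+3}), but when $\omega > k+1$ a single $w \in B$ can block several $v_i$'s at once, so bounding the number of blocked cliques by $|B|$ alone does not suffice. My plan is a double count on blocking pairs $(w, v_i)$ with $w \in B \cap N^+(v_i)$: on one side each $v_i$ contributes at most $\omega - 1 - k$ such pairs, and on the other each such pair yields a $(k+2)$-clique $K_i \cup \{g,w\}$ lying inside $N^+[v_i]$. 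Combining these bounds with $|B| \le \omega - 1$ and the vertex inventory $\sum |K_i| = ck \ge 3\omega$ should force at most $c - 3$ all-forward cliques to be blocked, leaving the required three useful triggers. If this head-on estimate falls short in borderline parameter ranges, I would widen the notion of trigger to any $u \in K_i \cap N^-(g)$ whose mother-chain under the Activation Strategy provably reaches $g$, using $|N^+[u]| \le \omega$ to bound how many of $u$'s parents can land in $B$; the bound $\chi^{(k)}_\text{g}(H) \le \lfloor (3\omega - 1)/k \rfloor + 1$ then follows by solving $ck \ge 3\omega$ for the least integer $c$.
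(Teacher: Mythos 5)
Your setup is sound and your notion of a ``useful trigger'' correctly generalizes the mechanism of Theorem \ref{k+3}, but the step you yourself flag as the main obstacle --- extracting three useful triggers from $ck - 3\omega + 1 > 0$ --- is a genuine gap, and no double count on blocking pairs can close it, because a single vertex $w \in B$ can block \emph{every} all-forward clique simultaneously. Concretely, take $k = 6$, $\omega = 8$, $c = 4$ (so $ck - 3\omega + 1 = 1 > 0$ and $\omega > k$), and let $H$ consist of a vertex $w$, a vertex $g$ with $w \adj g$, and four disjoint $6$-cliques $K_1, \ldots, K_4$, each of whose vertices is joined to both $g$ and $w$, ordered so that $w < g$ and both precede every clique vertex. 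This ordering is simplicial, $\omega(H) = k+2 = 8$, every $K_i$ is all-forward, and every $v_i$ has $w$ as its one extra back-neighbor, which lies in $B$. There are zero useful triggers, so the failure is structural, not confined to borderline parameter ranges. Of course $g$ is still perfectly safe in this configuration --- $w$ absorbs two actions and is colored after the first clique vertex is activated, after which the mothers of subsequent clique vertices move past $w$ to $g$ --- which shows that the static quantity $\maj(v_i)$ is the wrong thing to count: blockers get used up as the game proceeds.

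The paper closes exactly this gap by shifting the target of the count from $g$ alone to the whole set $N^{+}[g]$. Setting $X = V_g \setminus N^{+}[g]$, simpliciality gives that every parent of a vertex $v \in X$ that is at most $g$ in $\mathcal{L}$ lies in $N^{+}[g]$; hence, while $g$ is uncolored, the activation of any $v \in X$ forces an action on some (possibly changing) vertex of $N^{+}[g]$, namely $m(v)$. Since $|X| \geq ck - \omega + 1 > 2\omega \geq 2\,|N^{+}[g]|$ (this is precisely where $ck - 3\omega + 1 > 0$ enters) and each vertex absorbs at most two actions before it is colored, every vertex of $N^{+}[g]$ --- in particular $g$ --- is colored before all of $X$ can be activated, hence before all $c$ cliques can be completed. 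This aggregate, dynamic count over $N^{+}[g]$ is the rigorous form of your vaguer fallback about mother-chains ``provably reaching $g$''; replacing the hunt for three individual triggers by it makes the rest of your argument go through.
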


\begin{proof}

Let $c,k$ and $\omega$ be integers such that $ck - 3 \omega + 1 > 0$ and $\omega
> k$, and let $H$ be a chordal graph with clique number $\omega(H) = \omega$.
Assume Alice and Bob are playing the $k$-clique-relaxed coloring game with $c$
colors, and Alice has chosen some simplicial linear ordering $\mathcal{L}$ on
the vertices of $H$. The only way an uncolored vertex $g$ can have no legal
coloring is if it is the sole common vertex to $c$ otherwise disjoint cliques
$\mathcal{C}_1 , \ldots, \mathcal{C}_{c}$ of size $k+1$, where for each
$\alpha_i \in [c]$ all vertices belonging to $\mathcal{C}_i$, with the exception
of $g$, are colored color $\alpha_i$. For each $\alpha_i \in [c]$, we will let
$V_i$ denote the set of vertices belonging to $\mathcal{C}_i$. We define $V_g =
\bigcup_{i=1}^{c} V_i$.

Let $\{ a_1 , \ldots a_j \}$ be the set of back-neighbors of $g$. By Lemma
\ref{backneighbors}, $g$ has at most $\omega -1 $ back-neighbors and so $j \leq
\omega - 1$. Note that $N^{+} [g] = \{ g, a_1 , \ldots, a_j \}$.

Let $X = V_g \setminus N^{+} [g]$. Consider any vertex $v \in X$. Since $g$ is
adjacent to $v$, and $v$ cannot be a parent of $g$, then it must be the case
that $g < v$, i.e. $g$ is a back-neighbor of $v$. Hence $g$ is a possible
candidate for $\maj(v)$. The only way that $\maj(v) \neq g$ is if $\maj(v) < g$.
But by Theorem \ref{simplicial} and the choice of a simplicial ordering, that
would imply that $g \adj \maj (v)$. Hence for all $v \in X$, $\maj (v) \in N^{+}
[g]$. By the same reasoning, any parent of $v$ that is less than $g$ must be
contained in $N^{+} [g]$. Thus, so long as $g$ remains uncolored, when any $v
\in X$ is activated, Alice will take action in $N^{+} [g]$.

Consider the size of $X$ as compared to the size of $N^{+} [g]$. On one hand,
$|X| = c (k+1 -1) - j \geq ck - \omega +1$; on the other, we have $|N^{+} [g]| =
j+1 \leq \omega$. However,
$ck - 3 \omega + 1 > 0 \mbox{ if and only if } ck - \omega +1> 2 \omega$.
Thus, $|X| \geq ck - \omega + 1 > 2 \omega \geq 2 |N^{+} [g]|$, or $|X| > 2 |N^{+} [g]|$.


Therefore, by the time all vertices in $X$ are activated, there will have been a
total of over $2 |N^{+} [g]|$ actions performed on all of the vertices in $N^{+}
[g]$. Since each vertex can be activated at most twice before being colored,
then before all the vertices in $X$ have been activated, all the vertices in
$N^{+} [g]$ will have been colored. Consequently $g$ will have been colored
before all of its surrounding vertices have been activated.
\end{proof}

\begin{corollary}
\label{ubercoolcorollary}
If Alice is playing the $k$-clique-relaxed game on a partial $\lambda$-tree $G$,
then Alice can always win with  $\lfloor \frac{3 \lambda + 2}{k} \rfloor + 1$
colors.
\end{corollary}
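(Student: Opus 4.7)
The plan is to mirror the two earlier chordal-to-partial-tree reductions, namely Corollary \ref{k+3corollary} and Corollary \ref{chordalfourboundcorollary}, and apply Theorem \ref{ubercooltheorem} to an appropriate chordal supergraph. By definition of a partial $\lambda$-tree, $G$ is a spanning subgraph of some chordal graph $H$ with $\omega(H) = \lambda + 1$ and $V(H) = V(G)$. Alice will play the $k$-clique-relaxed coloring game on $G$ as though she were playing it on $H$, using the Activation Strategy with a simplicial ordering $\mathcal{L}$ of $V(H)$ guaranteed by Theorem \ref{simplicial}.

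First I would note, as in the previous corollaries, that any color legal for Alice in $H$ is also legal in $G$: since $E(G) \subseteq E(H)$, every $(k+1)$-clique in $G$ is also a $(k+1)$-clique in $H$, so a coloring that avoids monochromatic $(k+1)$-cliques in $H$ automatically avoids them in $G$. It therefore suffices to show that Alice has a winning strategy on $H$ with $c := \lfloor \frac{3\lambda + 2}{k} \rfloor + 1$ colors.

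Next I would set $\omega := \omega(H) = \lambda + 1$ and verify that the hypotheses of Theorem \ref{ubercooltheorem} are met for this $\omega$ and this $c$. The condition $\omega > k$ is the standing assumption $\lambda \geq k$ under which the statement is interesting (otherwise the result degenerates). For the inequality $ck - 3\omega + 1 > 0$, the defining identity $c = \lfloor \frac{3\omega - 1}{k} \rfloor + 1$ gives $c > \frac{3\omega - 1}{k}$, so that $ck > 3\omega - 1$, i.e.\ $ck - 3\omega + 1 > 0$, as required.

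With the hypotheses satisfied, Theorem \ref{ubercooltheorem} yields $\chi_\text{g}^{(k)}(H) \leq \lfloor \frac{3\omega - 1}{k} \rfloor + 1$. Substituting $\omega = \lambda + 1$ gives $\lfloor \frac{3(\lambda+1) - 1}{k} \rfloor + 1 = \lfloor \frac{3\lambda + 2}{k} \rfloor + 1$, which is exactly the claimed bound. There is no real obstacle here; the only thing to be careful about is the bookkeeping in the substitution $\omega = \lambda + 1$ and the observation that the floor-based choice of $c$ automatically satisfies the strict inequality $ck - 3\omega + 1 > 0$ needed to invoke Theorem \ref{ubercooltheorem}.
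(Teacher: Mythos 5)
Your proof is correct and follows essentially the same route as the paper's: embed $G$ in a chordal graph $H$ with $\omega(H)=\lambda+1$, play on $H$ via the Activation Strategy, invoke Theorem \ref{ubercooltheorem}, and substitute $\omega=\lambda+1$. Your explicit verification that $c=\lfloor\frac{3\omega-1}{k}\rfloor+1$ satisfies $ck-3\omega+1>0$ (and your remark on the need for $\omega>k$) is a small but welcome addition that the paper's own proof leaves implicit.
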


\begin{proof}

Assume Alice and Bob are playing the coloring game on $G$. Since $G$ is a
partial $\lambda$-tree, it is the subgraph of some chordal graph $H$ with clique
number $\omega(H) = \lambda+1$ and the same vertex set as $G$. Alice will play
the $k$-clique-relaxed coloring game on $G$ as though she were playing the game
on $H$ using the activation strategy. As before, Alice's moves on $H$ will still
be valid in $G$. It suffices to show that for any uncolored vertex $v \in V(H)$,
the strategy provides a legal coloring for Alice, which is shown in Theorem
\ref{ubercooltheorem} for $\lfloor \frac{3 \omega - 1}{k} \rfloor + 1$ colors.
Since $\omega = \lambda +1$, Alice only needs at most $\lfloor \frac{3 \lambda +
2}{k} \rfloor + 1$ colors to win.
\end{proof}

\begin{corollary}
If Alice is playing the k-clique-relaxed game on a partial $k$-tree $H$, then
Alice can always win with at most 4 colors.
\end{corollary}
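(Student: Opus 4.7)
The plan is to split into cases according to the value of $k$, applying the appropriate previously established corollary in each range, since no single earlier result is sharp enough across all $k$.

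For $k \geq 3$, I would invoke Corollary \ref{ubercoolcorollary} with $\lambda = k$. That corollary gives the bound
\[
\chi_\text{g}^{(k)}(H) \leq \left\lfloor \frac{3k+2}{k} \right\rfloor + 1 = \left\lfloor 3 + \frac{2}{k} \right\rfloor + 1.
\]
Since $0 < 2/k < 1$ whenever $k \geq 3$, the floor equals $3$, and the bound collapses to $4$. This is the case where Corollary \ref{ubercoolcorollary} is tight enough on its own, and there is no further work to do.

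For $k = 2$, the same bound from Corollary \ref{ubercoolcorollary} would give $\lfloor 3 + 1 \rfloor + 1 = 5$, which is too weak. Here I would instead directly quote Corollary \ref{chordalfourboundcorollary}, which says that on any partial $2$-tree Alice wins the $2$-clique-relaxed game using $4$ colors. Likewise, for $k = 1$, the bound from Corollary \ref{ubercoolcorollary} would give $\lfloor 3 + 2 \rfloor + 1 = 6$, again too weak. A partial $1$-tree is a forest, and the $1$-clique-relaxed game is simply the ordinary coloring game, so I would apply Corollary \ref{k+3corollary} with $k = 1$ to obtain $\chi_\text{g}^{(1)}(H) \leq 1 + 3 = 4$.

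Since any positive integer $k$ falls into exactly one of the ranges $k = 1$, $k = 2$, or $k \geq 3$, these three applications together yield $\chi_\text{g}^{(k)}(H) \leq 4$ for every partial $k$-tree $H$, completing the argument. There is no real obstacle here, only a bookkeeping observation: the general bound of Corollary \ref{ubercoolcorollary} is asymptotically correct but misses the two smallest values of $k$, which must be recovered from the earlier, case-specific results.
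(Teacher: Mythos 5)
Your proposal is correct and follows exactly the paper's own argument: the case split into $k \geq 3$ (via Corollary \ref{ubercoolcorollary}, where $\lfloor 3 + 2/k \rfloor + 1 = 4$), $k = 2$ (via Corollary \ref{chordalfourboundcorollary}), and $k = 1$ (via Corollary \ref{k+3corollary}) is precisely the decomposition the authors use. Nothing further is needed.
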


\begin{proof}
Let $H$ be such a graph.

\begin{itemize}
\item{When $k \geq 3$, then by Corollary \ref{ubercoolcorollary}, 
$\chi_\text{g}^{(k)}(H) \leq \lfloor \frac{3 k + 2}{k}  \rfloor + 1 = \lfloor 3
+ \frac{2}{k} \rfloor + 1 = 3 + 1 = 4$.}

\item{When $k=2$, then by Corollary \ref{chordalfourboundcorollary}, Alice can win using 4 colors.}

\item{When $k=1$, then by Corollary \ref{k+3corollary}, Alice can again win using 4 colors.}

\end{itemize}
Thus, for any partial $k$-tree, Alice can win the $k$-clique-relaxed game using only 4 colors.
\end{proof}

This theorem generalizes to all chordal graphs the result in
\cite{clique-relaxed} where the authors showed that for any outerplanar graph
$G$, $\chi_\text{g}^{(2)}(G) \leq 4$.

\section{Thoughts}

We are of the opinion that our bounds on $\chi_\text{g}^{(k)}$ for chordal
graphs can be improved upon, as the strategies used by Alice are geared towards
the original and relaxed coloring game. Indeed, one immediate advantage of the
clique-relaxed game is that the only vertices Alice need worry about in a graph
are those which belong to a clique of size $k+1$ or greater. When playing the
$k$-clique-relaxed coloring game, the vertices not belonging to a clique of size
$k+1$ or greater can all be legally colored with an arbitrary color. However,
thus far we have not come up with a strategy which takes advantage of this fact
and reduces the bound on the clique-relaxed chromatic number. In addition, one
will note that in our strategies, we have not specified what color Alice ought
to label a vertex with: we believe that a higher precision in color choice could
tighten the bounds on the clique-relaxed game chromatic number, but again thus
far we have not found a way to incorporate strategic color choices into a
strategy.

Moreover, we believe that strategies geared towards chordal graphs can be
modified for use on a new, broad class of graphs, the class of ($a$,$b$)-pseudo
chordal graphs first presented in \cite{pseudo}. These graphs possess as a main
underlying structure a chordal graph; the parameters $a$ and $b$ serve as a
measurement of how far a graph is from being chordal. However, in order to
extend our methods to this new class of graphs, a new algorithm further geared
towards cliques instead of just regular coloring or defect coloring is required.

Furthermore, similar to the previous Linfield REU group working on outerplanar
graphs, who believed that the $2$-clique-relaxed game chromatic number of any
outerplanar graph is at most $3$, we believe that the $2$-clique-relaxed game
chromatic number of any chordal graph with clique number $3$ is at most $3$.
However, so far we have been unsuccessful in proving this bound.

\end{document}